\pretocmd{\chapter}{\addtocontents{toc}{\protect\addvspace{15\p@}}}{}{}
\pretocmd{\section}{\addtocontents{toc}{\protect\addvspace{3\p@}}}{}{}
\def\@tocline#1#2#3#4#5#6#7{\relax
  \ifnum #1>\c@tocdepth 
  \else
    \par \addpenalty\@secpenalty\addvspace{#2}%
    \begingroup \hyphenpenalty\@M
    \@ifempty{#4}{%
      \@tempdima\csname r@tocindent\number#1\endcsname\relax
    }{%
      \@tempdima#4\relax
    }%
    \parindent\z@ \leftskip#3\relax \advance\leftskip\@tempdima\relax
    \rightskip\@pnumwidth plus4em \parfillskip-\@pnumwidth
    #5\leavevmode\hskip-\@tempdima
      \ifcase #1
       \or\or \hskip .5em \or \hskip 1em \else \hskip 1.5em \fi%
      #6\nobreak\relax
    \dotfill\hbox to\@pnumwidth{\@tocpagenum{#7}}\par
    \nobreak
    \endgroup
  \fi}
\newcommand{\C}{\mathbb{C}}
\newcommand{\N}{\mathbb{N}}
\newcommand{\Z}{\mathbb{Z}}
\newcommand{\Q}{\mathbb{Q}}
\newcommand{\F}{\mathbb{F}}
\newcommand{\cG}{\mathcal{G}}
\newcommand{\Gal}{\operatorname{Gal}}
\newcommand{\geo}{\operatorname{geo}}
\renewcommand{\sc}{\operatorname{sc}}
\newcommand{\End}{\operatorname{End}}
\newcommand{\GL}{\mathrm{GL}}
\newcommand{\Frob}{\mathrm{Fr}}
\newtheorem{thm}{Theorem}[section]
\newtheorem{cor}[thm]{Corollary}
\newtheorem{prop}[thm]{Proposition}
\newtheorem{lemma}[thm]{Lemma}
\newtheorem{remark}[thm]{Remark}
\newenvironment{customconj}[1]
  {\innercustomconj}
  {\endinnercustomconj}
\newenvironment{customthm}[1]
  {\innercustomthm}
  {\endinnercustomthm}
\begin{document}

\title{Invariant dimensions and maximality of geometric monodromy action}

\thanks{Chun Yin Hui is supported by the National Research Fund, Luxembourg, and cofunded under the Marie Curie Actions of the European Commission (FP7-COFUND)}
\maketitle

\begin{center}
Chun Yin Hui
\end{center}

\vspace{.1in}

\begin{center}
Mathematics Research Unit, University of Luxembourg,\\
6 rue Richard Coudenhove-Kalergi, L-1359 Luxembourg\\
Email: \url{pslnfq@gmail.com}
\end{center}

\begin{abstract}
Let $X$ be a smooth separated geometrically connected variety over $\F_q$ and $f:Y\to X$ a  smooth projective morphism.
We compare the invariant dimensions of the $\ell$-adic representation $V_\ell$ 
and the $\F_\ell$-representation $\bar V_\ell$ of the geometric \'etale fundamental group of $X$ 
arising from the sheaves $R^wf_*\Q_\ell$ and $R^wf_*\Z/\ell\Z$ respectively.
These invariant dimension data is used to deduce a maximality result 
of the geometric monodromy action on $V_\ell$ whenever $\bar V_\ell$
is semisimple and $\ell$ is sufficiently large. 
We also provide examples for $\bar V_\ell$ to be semisimple for $\ell\gg0$.
\end{abstract}

\maketitle

\section{Introduction}
Consider a smooth projective $\F_q$-morphism $f:Y\to X$,
where $X$ is a smooth separated geometrically connected $\F_q$-variety.
Fix a geometric point $\bar x_0:\mathrm{Spec}(\bar\F_q)\to X$.
For any prime $\ell\nmid q$ and integer $w$, $\mathscr{F}_\ell:=R^w f_*\Q_\ell$ is a \emph{lisse}, 
\emph{pure of weight} $w$, $\Q_\ell$-sheaf  
on $X$ \cite{De80} inducing 
an $\ell$-adic representation of the \emph{\'etale fundamental group} $\pi_1^{et}(X):=\pi_1^{et}(X,\bar x_0)$
on the stalk $\mathscr{F}_{\ell,\bar x_0}\cong H^w(Y_{\bar x_0},\Q_\ell)=:V_\ell$,
\begin{equation}\label{1}
\Phi_\ell:\pi_1^{et}(X)\to \GL(V_\ell);
\end{equation}
$\bar{\mathscr{F}}_\ell:=R^w f_*\Z/\ell\Z$ is a locally constant sheaf on $X$ inducing
an $\F_\ell$-representation on the stalk $\bar{\mathscr{F}}_{\ell,\bar x_0}\cong H^w(Y_{\bar x_0},\Z/\ell\Z)=:\bar V_\ell$,
\begin{equation}\label{1'}
\phi_\ell:\pi_1^{et}(X)\to \GL(\bar V_\ell).
\end{equation} 
The \emph{geometric \'etale fundamental group} of $X$, $\pi_1^{et}(X_{\bar\F_q}):=\pi_1^{et}(X_{\bar \F_q},\bar x_0)$, is a normal subgroup 
of $\pi_1^{et}(X)$ satisfying the exact sequence
\begin{equation}\label{2}
1\to \pi_1^{et}(X_{\bar\F_q})\to \pi_1^{et}(X)\to \Gal(\bar \F_q/\F_q)\to 1
\end{equation}
so that any $x\in X(\F_q)$ induces a splitting $i_x$ of (\ref{2}).
The \emph{monodromy group} $\Gamma_\ell$ (resp. $\bar\Gamma_\ell$) and the \emph{geometric monodromy group} $\Gamma_\ell^{\geo}$ 
(resp. $\bar{\Gamma}_\ell^{\geo}$) are defined 
to be the images of $\pi_1^{et}(X)$ and $\pi_1^{et}(X_{\bar\F_q})$ respectively in $\GL(V_\ell)$ (resp. $\GL(\bar V_\ell)$);
their Zariski closures in $\GL_{V_\ell}$, denoted respectively by $\mathbf{G}_\ell$ and $\mathbf{G}_\ell^{\geo}$,
are called the \emph{algebraic monodromy group} and the \emph{algebraic geometric monodromy group} of $\Phi_\ell$.

Since $\Gal(\bar \F_q/\F_q)\cong\hat{\Z}$ is abelian, 
the geometric monodromy groups $\Gamma_\ell^{\geo}$ and $\mathbf{G}_\ell^{\geo}$ are of particular interest by (\ref{2}). 
Deligne has proved that the identity component of $\mathbf{G}_\ell^{\geo}$ 
is a semisimple subgroup of $\GL_{V_\ell}$ \cite[Cor. 1.3.9, Thm. 3.4.1(iii)]{De80}.
Determining $\Gamma_\ell^{\geo}$ (or $\bar{\Gamma}_\ell^{\geo}$) and $\mathbf{G}_\ell^{\geo}$ 
for families of curves (elliptic \cite{Ha08}; hyperelliptic \cite{La90s},\cite{Yu96},\cite{AP07}; trielliptic \cite{AP07}) is of independent interest and also has applications to the arithmetic of function fields (see \cite{Yu96},\cite{Ac06}) 
and arithmetic geometry (see \cite{Ch97},\cite{Ko06a,Ko06b,Ko06c,Ko08}) over function fields.
A crucial point is that for all sufficiently large $\ell$, 
the geometric monodromy $\Gamma_\ell^{\geo}$ is a \emph{large} compact subgroup of 
$\mathbf{G}_\ell^{\geo}(\Q_\ell)$.
The motivation of this paper is to investigate the following 
large geometric monodromy conjecture. Let 
$\pi:\mathbf{G}_\ell^{\sc}\to \mathbf{G}_\ell^{\geo}$ be the natural morphism
such that $\mathbf{G}_\ell^{\sc}$
is the universal cover of the identity component of $\mathbf{G}_\ell^{\geo}$. 

\begin{customconj}{1}\label{conj}
Let $\Phi_\ell$ be the $\ell$-adic representation defined in (\ref{1}).
Then $\pi^{-1}(\Gamma_\ell^{\geo})$ is a hyperspecial maximal compact subgroup of 
$\mathbf{G}_\ell^{\sc}(\Q_\ell)$ whenever $\ell$ is sufficiently large.
\end{customconj}

Let us make a detour to the characteristic zero case.
Suppose $f:Y\to X$ is not defined over $\F_q$, but over a subfield $K$ of $\C$. 
Denote the $w$th Betti cohomology $H^w(Y_{\bar x_0}(\C),\Q)$ by $V$, which is acted
on by the topological fundamental group $\pi_1(X(\C))$.
Since the geometric representation $\Phi_\ell:\pi_1^{et}(X_{\bar K})\to \GL(V_\ell)$
is arising from $\Phi:\pi_1(X(\C))\to \GL(V)$ by the comparison theorem between Betti 
and \'etale cohomologies \cite[XII]{SGA1},\cite[XVI]{SGA4} and 
the identity component of algebraic monodromy group of $\Phi$ is semisimple over $\Q$ \cite[Cor. 4.2.9]{De71},
the geometric monodromy $\Gamma_\ell^{\geo}$ is large in $\mathbf{G}_\ell^{\geo}(\Q_\ell)$ 
for $\ell\gg0$, thanks to \cite{MVW84}. 
On the other hand, $\pi_1^{et}(X)$ satisfies (\ref{2}) with $\F_q$ replaced with $K$.
Since $\Gal(\bar K/K)$ is non-abelian,
the monodromy groups $\Gamma_\ell\subset\mathbf{G}_\ell(\Q_\ell)$ are complicated 
and carry a lot of arithmetic information.
If $K$ is a number field and $X=\mathrm{Spec}(K)$, then $\Phi_\ell$ is a Galois representation of $K$ arising from 
the smooth projective variety $Y/K$ and
the largeness of 
$\Gamma_\ell$ in $\mathbf{G}_\ell(\Q_\ell)$ for $\ell\gg0$
follows from the remarkable conjectures of 
Hodge, Grothendieck, Tate, Mumford-Tate, and Serre \cite[$\mathsection11$]{Se94}, see also \cite[$\mathsection5$]{HL15a}.
The prototypical result in this direction is due to Serre \cite{Se72}, which states that for any non-CM elliptic curve $Y$, 
the monodromy $\Gamma_\ell$ on $H^1$ is $\GL_2(\Z_\ell)$ for all sufficiently large $\ell$,
see also \cite{Ri76,Ri85},\cite{Se85},\cite{BGK03,BGK06,BGK10},\cite{Ha11} for certain abelian varieties;
\cite{HL15b} for arbitrary abelian varieties; 
\cite{Se98} for abelian representations; 
\cite{HL14} for type A representations; 
and partial results \cite{La95a},\cite{Hu14} for arbitrary varieties.
To get large Galois monodromy, one always needs handles on the invariants of $V_\ell$ and $\bar V_\ell$.
For example when $Y$ is an abelian variety and $w=1$, Faltings has proved that
the Galois invariants of $V_\ell\otimes V_\ell^*$ and $\bar V_\ell\otimes \bar V_\ell^*$
depend essentially on the endomorphism ring $\End(Y_{\bar K})$ if $\ell$ is sufficiently large \cite{Fa83},\cite{FW84} (the Tate conjecture).
Since the Tate conjecture remains largely open, large Galois monodromy is presumably difficult.

Back to our setting $f:Y\to X$ over $\F_q$, the main idea of this paper is that
there is a \textit{cohomological} way, without resorting to the Tate conjecture, to compare the geometric invariant dimensions of
$V_\ell^{\otimes m}$ and $\bar V_\ell^{\otimes m}$ for sufficiently large $\ell$ and sufficiently many $m$.

\begin{customthm}{2}
For any $m\in\N$, if $\ell$ is sufficiently large, then 
$$\dim_{\F_\ell}(\bar V_\ell^{\otimes m})^{\pi_1^{et}(X_{\bar\F_q})}=\dim_{\Q_\ell}(V_\ell^{\otimes m})^{\pi_1^{et}(X_{\bar\F_q})}.$$
\end{customthm}

This is accomplished in $\mathsection2$ first assuming $X$ is a curve by \'etale cohomology theory \cite{SGA4,SGA4.5,SGA5},\cite{Mi80},\cite{FK87},\cite{Fu11} and the 
remarkable theorems of Deligne \cite{De74b,De80}, Gabber \cite{Ga83}, and de Jong \cite{dJ96}, the general case
then follows from that by space filling curves \cite{Ka99} and $\ell$-independence of $\mathbf{G}_\ell$ \cite{Ch04}.

\begin{customthm}{3}
If $\phi_\ell$ is semisimple for all sufficiently large $\ell$, then Conjecture \ref{conj} holds.
\end{customthm}

Theorem $3$ is proved in $\mathsection3$ by a recent result of Cadoret and Tamagawa on $\bar{\Gamma}_\ell^{\geo}$ \cite{CT15}, the group theoretic techniques we employed and developed in \cite{HL14}, and exploiting the invariant dimension data 
(Theorem $2$ and Corollary \ref{sym}).
The $\F_\ell$-semisimplicity hypothesis of Theorem $3$ holds if $X$ is a curve 
and the fibers of $f$ are curves or abelian varieties \cite{Za74a,Za74b}.
It is suggestive that the hypothesis holds in general because the invariant dimensions of $\Gamma_\ell^{\geo}$ and 
$\bar\Gamma_\ell^{\geo}$ are alike (Theorem $2$) and $\Gamma_\ell^{\geo}$ is semisimple on $V_\ell$.
Nevertheless, we provide in $\mathsection4$ some examples for the hypothesis to hold.

\section{Invariant dimensions}
The notation in $\mathsection1$ remains in force. 
Embed $\bar\Z[\frac{1}{q}]$ into $\bar\Z_\ell$ with unique maximal ideal $\mathfrak{m}_\ell$.
The common dimension of $V_\ell$ for all $\ell$ (not dividing $q$) is also equal 
to the common dimension of $\bar V_\ell$ for all sufficiently large $\ell$ \cite{Ga83}.
Then whenever $\dim_{\F_\ell}\bar V_\ell=\dim_{\Q_\ell}V_\ell$,  one obtains 
\begin{equation}\label{geq}
\dim_{\F_\ell}(\bar V_\ell^{\otimes m})^{\pi_1^{et}(X_{\bar\F_q})}\geq\dim_{\Q_\ell}(V_\ell^{\otimes m})^{\pi_1^{et}(X_{\bar\F_q})}
\end{equation}
for any $m\in\N$ by identifying $\Gamma_\ell^{\geo}$ as a subgroup of $\GL(H^i(Y_{\bar x_0},\Z_\ell))$, the reduction map
$\GL(H^i(Y_{\bar x_0},\Z_\ell))\to\GL(\bar V_\ell)$, and Lemma \ref{summand}.

\begin{lemma}\label{summand}
Let $F$ be a characteristic $0$ non-Archimedean local field with $\mathscr{O}_F$ the ring of integers.
Let $M$ be a free $\mathscr{O}_F$-module of finite rank. If $W$ is an $F$-subspace of $M\otimes_{\mathscr{O}_F} F$, then 
$W\cap M$ is a direct summand of $M$.
\end{lemma}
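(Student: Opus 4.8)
The plan is to prove that $W \cap M$ is a direct summand of $M$ by showing the quotient $M/(W \cap M)$ is a free (equivalently, torsion-free) $\mathscr{O}_F$-module — over a discrete valuation ring (indeed a PID), a finitely generated module is free iff it is torsion-free, and any finitely generated submodule of a free module over a PID is free, so $W \cap M$ is automatically free; once $M/(W \cap M)$ is also free, the short exact sequence $0 \to W \cap M \to M \to M/(W \cap M) \to 0$ splits. So the whole content reduces to: **$M/(W \cap M)$ is torsion-free.**

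First I would set $N := W \cap M$ and take an arbitrary element $\bar v \in M/N$ that is torsion: there is a nonzero scalar $a \in \mathscr{O}_F$ with $a\bar v = 0$, i.e. $av \in N$ for some lift $v \in M$. Then $av \in W$, and since $W$ is an $F$-subspace of $M \otimes_{\mathscr{O}_F} F$ and $a$ is a unit in $F$, we get $v = a^{-1}(av) \in W$. But $v \in M$ as well, so $v \in W \cap M = N$, whence $\bar v = 0$. This shows $M/N$ has no nonzero torsion, so it is finitely generated (as $\mathscr{O}_F$ is Noetherian and $M$ is) and torsion-free over the DVR $\mathscr{O}_F$, hence free.

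With $M/N$ free, I would choose a basis and lift it to elements $m_1, \dots, m_s \in M$; these span a free submodule $M' \subseteq M$ mapping isomorphically onto $M/N$, giving $M = N \oplus M'$ and exhibiting $N = W \cap M$ as a direct summand. **The main obstacle is essentially nonexistent here** — the argument is elementary — but the one point deserving care is the structure-theory input: one must invoke that $\mathscr{O}_F$ is a principal ideal domain (it is a DVR, hence Noetherian and one-dimensional regular local), so that "finitely generated torsion-free $\Rightarrow$ free" and "submodule of free is free" both apply; the hypothesis that $F$ has characteristic $0$ and is non-Archimedean local is used only to guarantee $\mathscr{O}_F$ is such a ring, and in fact the proof works verbatim for $\mathscr{O}_F$ any DVR.
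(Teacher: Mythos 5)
Your proof is correct and follows essentially the same route as the paper's: show that $M/(W\cap M)$ is torsion-free by observing that if a nonzero scalar multiple of $v\in M$ lands in $W\cap M$ then $v\in W$ (since $W$ is an $F$-subspace), conclude the quotient is free over the PID $\mathscr{O}_F$, and split the resulting exact sequence. The only (harmless) difference is that you divide by an arbitrary nonzero element of $\mathscr{O}_F$ rather than a natural number $k$, which lets you dispense with the characteristic-$0$ hypothesis the paper invokes and makes the lemma valid over any DVR.
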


\begin{proof}
Since $\mathscr{O}_F$ is a PID and $\mathscr{O}_F/I$ is finite for any non-zero ideal $I$, 
the finitely generated module $M/W\cap M$ is a direct sum of a free submodule and 
a torsion submodule. If $x\in M$ maps to a torsion element in $M/W\cap M$, then $k\cdot x\in W\cap M$ for some $k\in\N$.
This implies $x\in W$ because $F$ is of characteristic $0$. Hence, $M/W\cap M$ is free and $W\cap M$ is a direct summand of $M$.
\end{proof}
Let $d$ be the dimension of $X$.
Since $X$ is smooth, $\mathscr{F}_\ell$ is lisse, and $\bar{\mathscr{F}}_\ell$ is locally constant, 
we obtain perfect pairings by Poincar\'e duality \cite[VI Thm. 11.1]{Mi80} which is compatible with the action of \emph{geometric Frobenius} $\Frob_q$:
\begin{align}\label{Poincare}
\begin{split}
H^i(X_{\bar \F_q},\mathscr{F}_\ell)&\times H^{2d-i}_c(X_{\bar \F_q},\mathscr{F}_\ell^\vee)\to \Q_\ell(-d);\\
H^i(X_{\bar \F_q},\bar{\mathscr{F}}_\ell)&\times H^{2d-i}_c(X_{\bar \F_q},\bar{\mathscr{F}}_\ell^\vee)\to \F_\ell(-d).
\end{split}
\end{align}
The geometric invariants admit the following descriptions:
\begin{align}\label{des}
\begin{split}
V_\ell^{\pi_1^{et}(X_{\bar\F_q})}&=H^0(X_{\bar \F_q},\mathscr{F}_\ell);\\
\bar V_\ell^{\pi_1^{et}(X_{\bar\F_q})}&=H^0(X_{\bar \F_q},\bar{\mathscr{F}}_\ell).
\end{split}
\end{align}
Without loss of generality, assume $x_0$ is an $\F_q$-rational point of $X$ that
induces a \emph{splitting} of (\ref{2}). 
Then the \emph{multiset} $A'$ of the 
$\Frob_q$-eigenvalues on $V_\ell:=H^w(Y_{\bar x_0},\Q_\ell)$ 
are independent of $\ell$ \cite{De74b} and pure of weight $w$ \cite{De80}.
It follows that the eigenvalues on
$H^0(X_{\bar \F_q},\mathscr{F}_\ell)$ belong to 
those on $V_\ell$ by the splitting and (\ref{des}).
One also sees by the same token that the eigenvalues on $H^0(X_{\bar \F_q},\bar{\mathscr{F}}_\ell)$ belong to 
the reduction modulo $\mathfrak{m}_\ell$ of the eigenvalues on $V_\ell$ whenever $\dim_{\F_\ell}\bar V_\ell=\dim_{\Q_\ell}V_\ell$.
Define $A$ to be the following multiset:
\begin{equation}\label{setA}
A:=\{q^d\alpha^{-1}:~\alpha\in A'\}.
\end{equation}
We conclude by (\ref{Poincare}) and above that the numbers in $A$ are pure of weight $2d-w$, 
the eigenvalues of $H^{2d}_c(X_{\bar \F_q},\mathscr{F}_\ell^\vee)$
is a sub-multiset of $A$, and the eigenvalues of $H^{2d}_c(X_{\bar \F_q},\bar{\mathscr{F}}_\ell^\vee)$
is a sub-multiset of the reduction modulo $\mathfrak{m}_\ell$  of $A$ for $\ell\gg0$.

\begin{customthm}{2}\label{inv}
For any $m\in\N$, if $\ell$ is sufficiently large, then 
$$\dim_{\F_\ell}(\bar V_\ell^{\otimes m})^{\pi_1^{et}(X_{\bar\F_q})}=\dim_{\Q_\ell}(V_\ell^{\otimes m})^{\pi_1^{et}(X_{\bar\F_q})}.$$
\end{customthm}

\begin{proof}
\textit{\textbf{Step I}}. Assume $X$ is a  (geometrically connected) curve, i.e., $d=1$. If $U$ is an affine open subscheme of $X$ containing $x_0$, 
then $\pi_1^{et}(U)$ surjects onto $\pi_1^{et}(X)$ \cite[Prop. 3.3.4(i)]{Fu11} and we obtain a commutative diagram:
\begin{equation}\label{square}
\begin{aligned}
\xymatrix{
\pi_1^{et}(U) \ar@{->>}[d] \ar[r] & \GL(V_\ell) \ar[d]^{=} \\
\pi_1^{et}(X) \ar[r]  & \GL(V_\ell)}
\end{aligned}
\end{equation}
Hence, we may further assume $X$ is an affine curve. 

\textit{\textbf{Step II}}. Let $e>0$ be the relative dimension of $f:Y\to X$. Then the dimension of $Y$ is $e+1$.
Let $Y^c$ be a compactification of $Y_{\bar\F_q}$. 
Then $Y^c$ admits a \emph{simplicial scheme} $Y.$ projective and smooth over $\bar\F_q$ and 
an augmentation $Y.\to Y^c$ which is a \emph{proper hypercovering} of $Y^c$ (see \cite[$\mathsection1$]{dJ96}).
This induces a spectral sequence
\begin{equation}\label{ss1}
E^{i,j}_1:=H^j(Y_i,\Z/\ell\Z)\Rightarrow H^{i+j}(Y^c,\Z/\ell\Z)
\end{equation}
by \cite[(6.3), Thm. 7.9]{Co03} (see also \cite{De74a}). 
Let $B'$ be the multiset consisting of all the $\Frob_q$-eigenvalues on $H^j(Y_i,\Q_\ell)$ 
for all $(i,j)\in\Z_{\geq 0}^2$ satisfying $i+j=2(e+1)-(1+w)$. 
Since $Y_i$ is smooth projective for all $i$, 
the multiset $B'$ is mixed of weight $\leq 2(e+1)-(1+w)$ and is independent of $\ell$ \cite{De74b,De80}.
Since there are only finitely many such $(i,j)$, 
 the $\Frob_q$-eigenvalues on 
$$H^{2(e+1)-(1+w)}(Y^c,\Z/\ell\Z)=:H^{2(e+1)-(1+w)}_c(Y_{\bar\F_q},\Z/\ell\Z)$$
belong to the reduction (modulo $\mathfrak{m}_\ell$) of $B'$ for $\ell\gg0$ 
by \cite{Ga83} and the biregular spectral sequence (\ref{ss1}). 
Since $Y$ is smooth, the reduction (modulo $\mathfrak{m}_\ell$) of the multiset (mixed of weight $\geq 1+w$)
\begin{equation*}
B'':=\{q^{e+1}\beta^{-1}:~\beta\in B'\}
\end{equation*}
contains all the $\Frob_q$-eigenvalues on $H^{1+w}(Y_{\bar\F_q},\Z/\ell\Z)$ for $\ell\gg0$ by Poincare duality.
Since the spectral sequence 
\begin{equation*}
E^{i,j}_2:=H^i(X_{\bar\F_q},R^jf_*\Z/\ell\Z)\Rightarrow H^{i+j}(Y_{\bar\F_q},\Z/\ell\Z)
\end{equation*}
degenerates on page $2$ (as $X$ is an affine curve), $E^{1,w}_2=H^1(X_{\bar\F_q},\bar{\mathscr{F}}_\ell)$
is a sub-quotient of $H^{1+w}(Y_{\bar\F_q},\Z/\ell\Z)$. Thus, the eigenvalues on 
$H^1(X_{\bar\F_q},\bar{\mathscr{F}}_\ell)$ belong to the reduction of $B''$ for $\ell\gg0$.
Then we conclude that the multiset (mixed of weight $\leq 1-w$) 
\begin{equation}\label{setB}
B:=\{q\beta^{-1}:~\beta\in B''\}
\end{equation}
after reduction contains all the eigenvalues 
on $H^1_c(X_{\bar\F_q},\bar{\mathscr{F}}_\ell^\vee)$  for $\ell\gg0$ by $X$ smooth and Poincare duality again.

\textit{\textbf{Step III}}. By the Lefschetz trace formula on the lisse sheaf $\mathscr{F}_\ell^\vee$ and 
the locally constant sheaf $\bar{\mathscr{F}}_\ell^\vee$ on $X_{\bar\F_q}$ \cite[VI Thm. 13.4]{Mi80}, we obtain
\begin{align}\label{trace}
\begin{split}
\sum_{x\in X(\F_{q^k})}\mathrm{Tr}(\Frob_{q}^k:H^w(Y_{\bar x},\Q_\ell)^\vee)
&=\sum_{i=0}^2(-1)^i\mathrm{Tr}(\Frob_{q}^k:H^i_c(X_{\bar\F_q},\mathscr{F}_\ell^\vee));\\
\sum_{x\in X(\F_{q^k})}\mathrm{Tr}(\Frob_{q}^k:H^w(Y_{\bar x},\Z/\ell\Z)^\vee)
&=\sum_{i=0}^2(-1)^i\mathrm{Tr}(\Frob_{q}^k:H^i_c(X_{\bar\F_q},\bar{\mathscr{F}}_\ell^\vee))
\end{split}
\end{align}
for all $k\in\N$. Since $Y_{\bar x}$ is smooth projective, the Frobenius action on $H^w(Y_{\bar x},\Z/\ell\Z)^\vee$
factors through $H^w(Y_{\bar x},\Q_\ell)^\vee$ for $\ell\gg0$ \cite{Ga83}. 
Hence, the reduction of the first local sum
is equal to the second local sum for $\ell\gg0$. Since $H^0_c=0$ by $X$ affine, we obtain
\begin{equation}\label{trace2}
\sum_{i=1}^2(-1)^i\overline{\mathrm{Tr}(\Frob_{q}^k:H^i_c(X_{\bar\F_q},\mathscr{F}_\ell^\vee))}=\sum_{i=1}^2(-1)^i\mathrm{Tr}(\Frob_{q}^k:H^i_c(X_{\bar\F_q},\bar{\mathscr{F}}_\ell^\vee))
\end{equation}
for $\ell\gg0$ by reduction.
Denote the reductions of $A$ (\ref{setA}) and $B$ (\ref{setB}) by $\bar A$ and $\bar B$ respectively and the following:
\begin{itemize}
\item $\{\alpha_1,...,\alpha_r\}\subset\bar A$ the multiset of the $\Frob_q$-eigenvalues on $H^2_c(X_{\bar\F_q},\bar{\mathscr{F}}_\ell^\vee)$; 
\item $\{\beta_1,...,\beta_s\}\subset\bar B$ the multiset of the $\Frob_q$-eigenvalues on $H^1_c(X_{\bar\F_q},\bar{\mathscr{F}}_\ell^\vee)$;
\item $\{a_1,...,a_t\}\subset\bar A$ the multiset of reduction of the $\Frob_q$-eigenvalues on $H^2_c(X_{\bar\F_q},\mathscr{F}_\ell^\vee)$;
\item $\{b_1,...,b_u\}$ the multiset of reduction of the $\Frob_q$-eigenvalues on $H^1_c(X_{\bar\F_q},\mathscr{F}_\ell^\vee)$.
\end{itemize}
Note that $r,t\leq |A|$, $s\leq |B|$, and the number $u$ is independent of $\ell$ \cite{Ka83}.
It follows from (\ref{trace2}) that the above eigenvalues (in $\bar\F_\ell^*$) satisfy
\begin{equation}\label{trace3}
a_1^k+\cdots+a_t^k+\beta_1^k+\cdots+\beta_s^k=\alpha_1^k+\cdots+\alpha_r^k+b_1^k+\cdots+b_u^k
\end{equation}
for all $k\in\N$. If $\ell>|A|+\mathrm{max}\{|B|,u\}$, then by Lemma \ref{trick} the two multisets coincide:
\begin{equation}\label{equal}
\{a_1,...,a_t,\beta_1,...,\beta_s\}=\{\alpha_1,...,\alpha_r,b_1,...,b_u\}.
\end{equation}
Since $A$ is pure of weight $2-w$ and $B$ is mixed of weight $\leq 1-w$ (Step II), $\bar A\cap\bar B= \emptyset$
for $\ell\gg0$ which implies $t\geq r$ by (\ref{equal}).
Together with (\ref{geq}),(\ref{Poincare}), and (\ref{des}), we obtain 
\begin{equation}\label{m=1}
\dim_{\F_\ell}\bar V_\ell^{\pi_1^{et}(X_{\bar\F_q})}=\dim_{\Q_\ell}V_\ell^{\pi_1^{et}(X_{\bar\F_q})}
\end{equation}
for all sufficiently large $\ell$.

\textit{\textbf{Step IV}}. Since $f:Y\to X$ is smooth projective, the natural morphism
$$Y^{[m]}:=\overbrace{Y\times_X Y\times_X \cdots\times_X Y}^{m~\mathrm{terms}} \to X$$
is still smooth projective with the fiber 
\begin{equation}\label{fiber}
(Y^{[m]})_{\bar x_0}=\prod^m Y_{\bar x_0}
\end{equation}
inducing the representations $W_\ell:=H^{mw}((Y^{[m]})_{\bar x_0},\Q_\ell)$ and $\bar W_\ell:=H^{mw}((Y^{[m]})_{\bar x_0},\Z/\ell\Z)$
of $\pi^{et}_1(X)$. For all sufficiently large $\ell$, we have
\begin{equation}\label{invw}
\dim_{\F_\ell}\bar W_\ell^{\pi_1^{et}(X_{\bar\F_q})}=\dim_{\Q_\ell}W_\ell^{\pi_1^{et}(X_{\bar\F_q})}
\end{equation}
by (\ref{m=1}). Since the representation $V_\ell^{\otimes m}$ (resp. $\bar V_\ell^{\otimes m}$) 
is a direct summand of the representation $W_\ell$ (resp. $\bar W_\ell$)
by (\ref{fiber}) and the K$\mathrm{\ddot{u}}$nneth isomorphism, we obtain by (\ref{invw}) that
\begin{equation}\label{mgen}
\dim_{\F_\ell}(\bar V_\ell^{\otimes m})^{\pi_1^{et}(X_{\bar\F_q})}=\dim_{\Q_\ell}(V_\ell^{\otimes m})^{\pi_1^{et}(X_{\bar\F_q})}
\end{equation}
holds for all sufficiently large $\ell$. This proves Theorem \ref{inv} when $X$ is a curve.

\textit{\textbf{Step V}}. For general smooth geometrically connected $X$, 
it suffices to prove Theorem \ref{inv}
for quasi-projective $X$ (see (\ref{square})).
If $C\subset X$ (containing $x_0$) is a smooth geometrically connected curve over $\F_q$, then 
$$\Psi_{\ell}:\pi_1^{et}(C)\to\GL(V_{\ell})$$
factors through $\Phi_\ell$ for all $\ell$.
Denote by $\Lambda_\ell^{\geo}$ and $\mathbf{H}_{\ell}^{\geo}$ respectively the geometric monodromy group 
and the algebraic geometric monodromy group of $\Psi_\ell$.
Choose $\ell_0$ such that the dimension of $\mathbf{G}_{\ell_0}^{\geo}$ is the largest. 
By \cite[Cor. 7, Thm. 8]{Ka99}, there exists a space filling curve $C\subset X$ 
(smooth, geometrically connected, containing $x_0$, over $\F_q$) 
satisfying 
\begin{equation}\label{monoeq}
\mathbf{H}_{\ell_0}^{\geo}= \mathbf{G}_{\ell_0}^{\geo}.
\end{equation}
Since the system $\{\Psi_\ell\}$ is pure of weight $w$ and is semisimple on 
the geometric \'etale fundamental group $\pi_1^{et}(C_{\bar\F_q})$ (Deligne), 
the identity component 
$(\mathbf{H}_{\ell}^{\geo})^\circ$ (semisimple) is isomorphic to the derived group of the identity component of 
the algebraic monodromy group of the semisimplification of $\Psi_\ell$ for all $\ell$ by (\ref{2}).
This implies $(\mathbf{H}_{\ell}^{\geo})^\circ\times\C$ 
is independent of $\ell$ by applying \cite[Thm. 1.4]{Ch04}
to the semisimplification of the system $\{\Psi_\ell\}$. 
In particular, the dimension of $\mathbf{H}_{\ell}^{\geo}$
is independent of $\ell$. 
Since we have
$$\dim \mathbf{H}_{\ell}^{\geo}=\dim \mathbf{H}_{\ell_0}^{\geo}=\dim \mathbf{G}_{\ell_0}^{\geo}\geq \dim\mathbf{G}_{\ell}^{\geo}$$
and the groups $\mathbf{H}_{\ell}^{\geo}\subset \mathbf{G}_{\ell}^{\geo}$ (by $\Psi_\ell$ factors through $\Phi_\ell$)
have the same number of connected components (by (\ref{monoeq}) and \cite[Prop. 2.2(iii)]{LP95}) 
for all $\ell$, we obtain $\mathbf{H}_{\ell}^{\geo}=\mathbf{G}_{\ell}^{\geo}$ for all $\ell$.
Since (a) $\Lambda_\ell^{\geo}$ (resp. $\Gamma_\ell^{\geo}$) is Zariski dense in 
$\mathbf{H}_{\ell}^{\geo}$ (resp. $\mathbf{G}_{\ell}^{\geo}$) and (b) $\Psi_\ell$ factors through $\Phi_\ell$,
we obtain 
\begin{align*}
\begin{split}
\dim_{\F_\ell}(\bar V_\ell^{\otimes m})^{\pi_1^{et}(X_{\bar\F_q})}&\stackrel{(b)}{\leq}\dim_{\F_\ell}(\bar V_\ell^{\otimes m})^{\pi_1^{et}(C_{\bar\F_q})}
\stackrel{(\ref{mgen})}{=}\dim_{\Q_\ell}(V_\ell^{\otimes m})^{\pi_1^{et}(C_{\bar\F_q})}\\
\stackrel{(a)}{=}\dim_{\Q_\ell}(V_\ell^{\otimes m})^{\mathbf{H}_{\ell}^{\geo}(\Q_\ell)}&=
\dim_{\Q_\ell}(V_\ell^{\otimes m})^{\mathbf{G}_{\ell}^{\geo}(\Q_\ell)}\stackrel{(a)}{=}
\dim_{\Q_\ell}(V_\ell^{\otimes m})^{\pi_1^{et}(X_{\bar\F_q})}
\end{split}
\end{align*}
for $\ell\gg0$. We are done by (\ref{geq}).
\end{proof}

\begin{lemma}\label{trick}
Suppose $a_1,...,a_m,b_1,...,b_n\in\bar\F_\ell^*$ satisfying $\mathrm{max}\{m,n\}<\ell$ and
\begin{equation}\label{sums}
a_1^k+\cdots +a_m^k=b_1^k+\cdots +b_n^k
\end{equation}
for all $1\leq k\leq \mathrm{max}\{m,n\}$. Then the two multisets $\{a_1,...,a_m\}$ and $\{b_1,...,b_n\}$ coincide.
\end{lemma}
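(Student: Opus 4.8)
The plan is to use a Vandermonde/power-sum argument, but over $\bar\F_\ell$, so I must be careful that the relevant determinants are invertible \emph{modulo} $\ell$. Set $N:=\max\{m,n\}$. First I would reduce to the case $m=n$: if say $m<n$, pad the left-hand multiset with $n-m$ extra entries equal to $0$. The hypothesis $a_1^k+\cdots+a_m^k=b_1^k+\cdots+b_n^k$ for $1\le k\le N=n$ is unchanged (the padded zeros contribute nothing for $k\ge 1$), and if I can prove the two padded multisets coincide in $\bar\F_\ell$, then since the $b_j$ are all nonzero while the padded left side has $n-m$ zeros, I must have $n-m=0$ after all; so the reduction is harmless and I may assume $m=n$ from now on, with all $a_i,b_j\in\bar\F_\ell^*$ and power-sum identities for $1\le k\le m$.

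Next I would run the classical argument passing from power sums to elementary symmetric functions via Newton's identities. Newton's identities express $e_k$ (the $k$-th elementary symmetric polynomial in $m$ variables) recursively in terms of the power sums $p_1,\dots,p_k$, and the recursion
$$k\,e_k = \sum_{i=1}^{k}(-1)^{i-1} e_{k-i}\,p_i$$
lets one solve for $e_k$ provided $k$ is invertible in the coefficient ring. Here $1\le k\le m<\ell$, so every such $k$ is a unit in $\bar\F_\ell$, and hence by induction on $k$ the elementary symmetric functions $e_1(a),\dots,e_m(a)$ are universal $\bar\F_\ell$-polynomials in $p_1(a),\dots,p_m(a)$, and likewise for the $b$'s. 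Since $p_k(a)=p_k(b)$ for $1\le k\le m$ by hypothesis, we get $e_k(a)=e_k(b)$ for all $1\le k\le m$. Therefore the two monic degree-$m$ polynomials $\prod_{i=1}^m(T-a_i)$ and $\prod_{j=1}^m(T-b_j)$ in $\bar\F_\ell[T]$ have the same coefficients, hence are equal. Because $\bar\F_\ell$ is a field (indeed algebraically closed), equality of these polynomials forces equality of their multisets of roots, i.e. $\{a_1,\dots,a_m\}=\{b_1,\dots,b_m\}$ as multisets, which is the desired conclusion.

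The only real subtlety — and the step I would flag as the potential obstacle — is precisely the invertibility of the integers $1,2,\dots,N$ in $\bar\F_\ell$, which is exactly what the hypothesis $\max\{m,n\}<\ell$ guarantees and why that bound appears in the statement (and, upstream, why the theorem needs $\ell>|A|+\max\{|B|,u\}$). Without it Newton's identities cannot be inverted and the conclusion genuinely fails (e.g. in characteristic $p$ one cannot distinguish a single element $a$ from its $p$-fold repetition using only $p_1,\dots,p_{p-1}$, since $p_k$ for $k<p$ sees $\{a\}$ and $\{a,\dots,a\}$ ($p$ copies) as $a^k$ versus $p\,a^k=0$... ). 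So the proof is: (i) pad to make $m=n$; (ii) invoke Newton's identities, valid since $k<\ell$; (iii) conclude equality of the characteristic polynomials and hence of the root multisets. I expect (ii) to be the crux and everything else to be bookkeeping.
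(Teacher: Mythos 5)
Your proof is correct and follows essentially the same route as the paper: reduce to equal cardinalities by padding with zeros, invert Newton's identities using $\max\{m,n\}<\ell$ to equate the elementary symmetric functions, and compare the resulting monic polynomials, with the nonvanishing of the entries ruling out genuine padding. The paper merely presents the two steps in the opposite order (equal case first, then padding to derive a contradiction when $m>n$), which is an immaterial difference.
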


\begin{proof}
First assume $m=n$. Let $x_1,...,x_m$ be indeterminate variables. Denote 
the elementary symmetric polynomials in $x_1,...,x_m$ by $e_1,...,e_m$ and $x_1^k+\cdots +x_m^k$ by $p_k$.
The \emph{Newton's identities} imply 
$$e_1,...,e_m\in\Z[\frac{1}{m!}](p_1,...,p_m).$$ 
Hence, $e_k(a_1,...,a_m)=e_k(b_1,...,b_m)$ for all $1\leq k\leq m$ by (\ref{sums}) and $m<\ell$.
We conclude that $\{a_1,...,a_m\}=\{b_1,...,b_m\}$ by constructing a degree $m$ polynomial in $\bar\F_\ell[t]$
whose roots are exactly $a_1,...,a_m$ (resp. $b_1,...,b_m$).

Suppose $m>n$. Let $b_{n+1},...,b_m$ be all zeros. Then some $a_i$ is zero by the above case, which contradicts $a_i\in\bar\F_\ell^*$. 
\end{proof}

\begin{cor}\label{sym}
For any $m\in\N$, if $\ell$ is sufficiently large, then 
\begin{align*}
\begin{split}
\dim_{\F_\ell}(\mathrm{Sym}^m\bar V_\ell)^{\pi_1^{et}(X_{\bar\F_q})}&=\dim_{\Q_\ell}(\mathrm{Sym}^m V_\ell)^{\pi_1^{et}(X_{\bar\F_q})};\\
\dim_{\F_\ell}(\mathrm{Alt}^m\bar V_\ell)^{\pi_1^{et}(X_{\bar\F_q})}&=\dim_{\Q_\ell}(\mathrm{Alt}^m V_\ell)^{\pi_1^{et}(X_{\bar\F_q})}.
\end{split}
\end{align*}
\end{cor}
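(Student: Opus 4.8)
The plan is to deduce Corollary \ref{sym} from Theorem \ref{inv} by a representation-theoretic argument in characteristic zero and in large positive characteristic simultaneously. The key observation is that $\mathrm{Sym}^m V_\ell$ and $\mathrm{Alt}^m V_\ell$ are canonical direct summands of $V_\ell^{\otimes m}$ as $\pi_1^{et}(X_{\bar\F_q})$-modules, cut out by the idempotents $\frac{1}{m!}\sum_{\sigma\in S_m}\sigma$ and $\frac{1}{m!}\sum_{\sigma\in S_m}\mathrm{sgn}(\sigma)\sigma$ in the group algebra of the symmetric group $S_m$ acting on the tensor factors. These idempotents are defined over $\Z[\frac{1}{m!}]$, hence make sense both over $\Q_\ell$ and over $\F_\ell$ as soon as $\ell>m$, and they commute with the monodromy action since that action is diagonal on the $m$ tensor slots. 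So for $\ell>m$ one has decompositions $V_\ell^{\otimes m}=\mathrm{Sym}^m V_\ell\oplus\mathrm{Alt}^m V_\ell\oplus R_\ell$ and $\bar V_\ell^{\otimes m}=\mathrm{Sym}^m\bar V_\ell\oplus\mathrm{Alt}^m\bar V_\ell\oplus\bar R_\ell$ of $\pi_1^{et}(X_{\bar\F_q})$-modules, where $R_\ell$ (resp. $\bar R_\ell$) is the sum of the remaining isotypic pieces for $S_m$, again cut out by idempotents over $\Z[\frac{1}{m!}]$.

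The next step is to pass to a common $\Z_\ell$-lattice. As in the proof of Theorem \ref{inv}, identify $\Gamma_\ell^{\geo}\subset\GL(H^w(Y_{\bar x_0},\Z_\ell))$ and let $M$ be the $\Z_\ell$-lattice $H^w(Y_{\bar x_0},\Z_\ell)^{\otimes m}$, stable under $\pi_1^{et}(X_{\bar\F_q})$ and under $S_m$. For $\ell>m$ the Young idempotents act on $M$, so $M=\mathrm{Sym}^m M\oplus\mathrm{Alt}^m M\oplus R_M$ as $\Z_\ell[\pi_1^{et}(X_{\bar\F_q})]$-modules, with the first summand a lattice in $\mathrm{Sym}^m V_\ell$ reducing to $\mathrm{Sym}^m\bar V_\ell$, and similarly for the alternating piece and the remainder. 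Now invariants: by Lemma \ref{summand}, $(\mathrm{Sym}^m M)^{\pi_1^{et}(X_{\bar\F_q})}$ is a direct summand of $\mathrm{Sym}^m M$, so its rank is at most $\dim_{\F_\ell}(\mathrm{Sym}^m\bar M)^{\pi_1^{et}(X_{\bar\F_q})}$ via reduction, giving inequalities
\begin{align*}
\dim_{\Q_\ell}(\mathrm{Sym}^m V_\ell)^{\pi_1^{et}(X_{\bar\F_q})}&\leq\dim_{\F_\ell}(\mathrm{Sym}^m\bar V_\ell)^{\pi_1^{et}(X_{\bar\F_q})},\\
\dim_{\Q_\ell}(\mathrm{Alt}^m V_\ell)^{\pi_1^{et}(X_{\bar\F_q})}&\leq\dim_{\F_\ell}(\mathrm{Alt}^m\bar V_\ell)^{\pi_1^{et}(X_{\bar\F_q})},\\
\dim_{\Q_\ell}(R_\ell)^{\pi_1^{et}(X_{\bar\F_q})}&\leq\dim_{\F_\ell}(\bar R_\ell)^{\pi_1^{et}(X_{\bar\F_q})},
\end{align*}
exactly as in \eqref{geq}. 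Summing the three lines, the left side totals $\dim_{\Q_\ell}(V_\ell^{\otimes m})^{\pi_1^{et}(X_{\bar\F_q})}$ and the right side totals $\dim_{\F_\ell}(\bar V_\ell^{\otimes m})^{\pi_1^{et}(X_{\bar\F_q})}$, and these two are equal by Theorem \ref{inv} for $\ell\gg0$. Three non-negative quantities with sum zero must each vanish, so each inequality above is an equality, which is the assertion.

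The main obstacle, such as it is, is bookkeeping: one must make sure the idempotent decomposition of the lattice $M$ is genuinely a decomposition of $\Z_\ell$-modules (not just up to isogeny), which holds precisely because $\frac{1}{m!}\in\Z_\ell$ once $\ell>m$, and that reduction modulo $\mathfrak{m}_\ell$ commutes with the idempotents and with taking invariants in the direction needed for the inequality — this is the content of Lemma \ref{summand} applied summand by summand. There is also a minor point that the ``largeness'' input $\dim_{\F_\ell}\bar V_\ell=\dim_{\Q_\ell}V_\ell$ for $\ell\gg0$ \cite{Ga83} is what lets us choose the lattice $M$ with the correct reduction; this is already invoked in the discussion preceding \eqref{geq}. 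No new geometric input is required — Corollary \ref{sym} is a purely formal consequence of Theorem \ref{inv} together with the semisimplicity of the $S_m$-action in characteristic $0$ and in characteristic $\ell>m$.
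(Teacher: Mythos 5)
Your proposal is correct and follows essentially the same route as the paper: the paper's own proof is exactly the observation that the mod-$\ell$ invariant dimension is always at least the $\Q_\ell$ one (as in \eqref{geq}, via Lemma \ref{summand} and reduction of a lattice), that $\mathrm{Sym}^m$ and $\mathrm{Alt}^m$ are direct summands of the $m$-th tensor power (for $\ell>m$), and that equality of the total invariant dimensions in Theorem \ref{inv} then forces equality summand by summand. Your write-up merely spells out these steps (Young idempotents over $\Z[\frac{1}{m!}]$, the lattice $M$, summing the three inequalities), so no further comment is needed.
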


\begin{proof}
Since the left hand side is always greater than or equal to the right hand side of the equation and the representations
 $\bar V_\ell^{\otimes m}$ and $V_\ell^{\otimes m}$ contain respectively
$\mathrm{Sym}^m \bar V_\ell$ and $\mathrm{Sym}^m V_\ell$ (resp. $\mathrm{Alt}^m \bar V_\ell$ and $\mathrm{Alt}^m V_\ell$) 
as direct summands, the corollary follows from Theorem \ref{inv}. 
\end{proof}

\section{Maximality}
If $X'_{\bar\F_q}$ is a connected finite \'etale cover of $X_{\bar\F_q}$, 
then $\pi_1^{et}(X'_{\bar\F_q})$ is a finite index subgroup of $\pi_1^{et}(X_{\bar\F_q})$.
Since $X'_{\bar\F_q}\to X_{\bar\F_q}$ is always defined over some finite extension $\F_{q'}$ of $\F_q$ (e.g., $X'_{\F_{q'}}\to X_{\F_{q'}}$) 
which does not affect the geometric monodromy and the restriction of a semisimple representation to a normal subgroup is still semisimple, 
it suffices to prove Theorem \ref{main}
by considering the base change 
$$Y\times_X X'_{\F_{q'}}\to X'_{\F_{q'}}$$
of $f:Y\to X$ by a connected finite Galois \'etale cover $X'_{\F_{q'}}\to X_{\F_{q'}}\to X$. Hence, we assume from now on that 
the algebraic geometric monodromy group $\mathbf{G}_\ell^{\geo}$ is \emph{connected} for all $\ell$
by taking a connected finite \'etale cover of $X$ \cite[Prop. 2.2(ii)]{LP95}.
Let $n$ be the common dimension of $V_\ell$ for all $\ell$, which is also the common dimension of $\bar V_\ell$ for $\ell\gg0$.

\begin{customthm}{3}\label{main}
If $\phi_\ell$ is semisimple for all sufficiently large $\ell$, then Conjecture \ref{conj} holds.
\end{customthm}

\begin{proof}
\textit{\textbf{Step I}}. For any subgroup $\bar\Gamma$ of $\GL_n(\F_\ell)$, denote by $\bar\Gamma^+$ 
the (normal) subgroup of $\bar\Gamma$ that is generated by $\bar\Gamma[\ell]$, the subset of 
order $\ell$ elements of $\bar\Gamma$.
By taking some connected finite Galois \'etale cover of $X$,
we may assume $\bar\Gamma_\ell^{\geo}=(\bar\Gamma_\ell^{\geo})^+$ \cite[Prop. 3.2, Thm. 1.1]{CT15},
$\bar\Gamma_\ell^{\geo}$ is semisimple on $\bar V_\ell$, and $\mathbf{G}_\ell^{\geo}$ is connected
for all sufficiently large $\ell$.
Since $n=\dim_{\F_\ell}\bar V_\ell$ for $\ell\gg0$,  
 there exists an \emph{exponentially generated} subgroup $\bar{\mathbf{S}}_\ell$ 
 of $\GL_{\bar V_\ell}$ such that 
$\bar\Gamma_\ell^{\geo}=\bar{\mathbf{S}}_\ell(\F_\ell)^+$ for all $\ell\gg 0$ by Nori \cite[Thm. B]{No87}.
The Nori subgroup $\bar{\mathbf{S}}_\ell$ is connected and an extension of semisimple by unipotent.
Since $\bar\Gamma_\ell^{\geo}$ is semisimple on $\bar V_\ell$ for $\ell\gg0$, 
$\bar{\mathbf{S}}_\ell$ is connected semisimple for $\ell\gg0$. Let $\bar{\mathbf{S}}_\ell^{\sc}\to \bar{\mathbf{S}}_\ell$ 
be the universal covering of $\bar{\mathbf{S}}_\ell$. The representation 
$$\bar{\mathbf{S}}_\ell^{\sc}\times\bar\F_\ell\to \bar{\mathbf{S}}_\ell\times\bar\F_\ell\hookrightarrow\GL_{\bar V_\ell\times\bar\F_\ell}$$
can be lifted to a representation of some simply-connected \emph{Chevalley scheme} over $\Z$ 
for all $\ell\gg0$ \cite{Se86} 
(see \cite[Thm. 27]{EHK12}),
\begin{equation}\label{Chevalley}
\rho_{\ell,\Z}:\mathbf{H}_{\ell,\Z}\to \GL_{V_\Z}.
\end{equation}

\textit{\textbf{Step II}}.  
We would like to study the invariants of $\bar{\mathbf{S}}_\ell$ on $\bar V_\ell^{\otimes m}\otimes\bar\F_\ell$.
Let us recall the construction of $\bar{\mathbf{S}}_\ell$. Define $\mathrm{exp}(x)$ and $\mathrm{log}(x)$ by
\begin{equation}\label{explog1}
\mathrm{exp}(x)=\sum_{i=0}^{\ell-1}\frac{x^i}{i!}\hspace{.1in}\mathrm{and}\hspace{.1in}
\mathrm{log}(x)=-\sum_{i=1}^{\ell-1}\frac{(1-x)^i}{i}.
\end{equation}
For all sufficiently large $\ell$, $\bar{\mathbf{S}}_\ell$ is the Zariski closure in $\GL_{\bar V_\ell}\cong\GL_{n,\F_\ell}$ of the subgroup generated by the 
one-parameter subgroup
\begin{equation}\label{explog2}
t\mapsto x^t:=\mathrm{exp}(t\cdot\mathrm{log}(x))
\end{equation}
for all $x\in\bar\Gamma_\ell^{\geo}[\ell]$ (the order $\ell$ elements) \cite{No87}.
When $\ell>n$, $x$ is unipotent and $\mathrm{log}(x)$ is nilpotent by (\ref{explog1}). 
Identify $\bar V_\ell\otimes\bar\F_\ell$ with $\bar\F_\ell^n$, then every entry of the matrix $x^t\in \GL_n(\bar\F_\ell[t])$
is a polynomial of degree less than $n^2$ by (\ref{explog1}) and (\ref{explog2}).
Similarly, the action of $x^t$ on $\bar V_\ell^{\otimes m}\otimes\bar\F_\ell$ can be identified with an element of $\GL_{n^m}(\bar\F_\ell[t])$
whose entries are polynomials of degree less than $n^2m$.
Consider an invariant $v\in (\bar V_\ell^{\otimes m})^{\pi_1^{et}(X_{\bar\F_q})}=(\bar V_\ell^{\otimes m})^{\bar\Gamma_\ell^{\geo}}$, then the equation in $\bar\F_\ell[t]^{n^m}$ below
$$x^t\cdot v=v$$
has at least $\ell$ distinct roots $t=0,1,...,\ell-1$ because $id,x,...,x^{\ell-1}\in \bar\Gamma_\ell^{\geo}$. This implies $x^t\cdot v\equiv v$ when $\ell\geq n^2m$.
Hence, we obtain $v\in (\bar V_\ell^{\otimes m}\otimes\bar\F_\ell)^{\bar{\mathbf{S}}_\ell}$ when $\ell\geq n^2m$ 
by the construction of $\bar{\mathbf{S}}_\ell$ .
Since $\bar\Gamma_\ell^{\geo}=(\bar\Gamma_\ell^{\geo})^+$ \cite{CT15}
is a subgroup of $\bar{\mathbf{S}}_\ell$ for $\ell\gg0$, we obtain
\begin{equation*}
\dim_{\F_\ell} (\bar V_\ell^{\otimes m})^{\pi_1^{et}(X_{\bar\F_q})}    =\dim_{\bar\F_\ell}(\bar V_\ell^{\otimes m}\otimes\bar\F_\ell)^{\bar{\mathbf{S}}_\ell}
\end{equation*}
for $\ell\gg0$. It follows that 
\begin{equation*}
\dim_{\F_\ell} ((\oplus^n\bar V_\ell)^{\otimes m})^{\pi_1^{et}(X_{\bar\F_q})}    =\dim_{\bar\F_\ell}((\oplus^n\bar V_\ell)^{\otimes m}\otimes\bar\F_\ell)^{\bar{\mathbf{S}}_\ell}
\end{equation*}
for $\ell\gg0$. By Corollary \ref{sym} and embedding $\Q_\ell$ into $\C$, we conclude for $\ell\gg0$ that
\begin{equation}\label{Noriinv}
\dim_{\C} (\mathrm{Sym}^m (\oplus^n V_\ell)\otimes \C)^{\mathbf{G}_\ell^{\geo}}=\dim_{\bar\F_\ell}(\mathrm{Sym}^m(\oplus^n\bar V_\ell)\otimes\bar\F_\ell)^{\bar{\mathbf{S}}_\ell}.
\end{equation}

\textit{\textbf{Step III}}. Denote the base change of (\ref{Chevalley}) to $\C$ by $\rho_{\ell,\C}:\mathbf{H}_{\ell,\C}\to \GL_{V_\C}$.
For fixed $m\in\N$, we obtain by Step I and (\ref{Noriinv}) that for $\ell\gg0$,
\begin{equation}\label{Ceqt}
\dim_{\C} (\mathrm{Sym}^m (\oplus^n V_\ell)\otimes \C)^{\mathbf{G}_\ell^{\geo}} =\dim_{\C}(\mathrm{Sym}^m (\oplus^n V_\C))^{\mathbf{H}_{\ell,\C}}.
\end{equation}
Since there are finitely many connected semisimple subgroup of $\GL_{n,\C}$ (up to isomorphism), (\ref{Ceqt}) holds for all $m\in\N$ 
when $\ell$ is sufficiently large. Identify $V_\ell\otimes \C$ with $V_\C$.
Then  the (Noetherian) graded rings
$$R=\C[\oplus^n V_\C]^{\mathbf{G}_\ell^{\geo}}\hspace{.1in}\mathrm{and}\hspace{.1in} R'=\C[\oplus^n V_\C]^{\mathbf{H}_{\ell,\C}}$$
have the same \emph{Hilbert polynomial}, hence the same \emph{Krull dimension} for $\ell\gg0$. 
Since $\dim_{Krull} R=n^2-\dim \mathbf{G}_\ell^{\geo}$ and $\dim_{Krull} R'=n^2-\dim \rho_{\ell,\C}(\mathbf{H}_{\ell,\C})$ 
(for example \cite[$\mathsection0$]{LP90}),  we conclude by the lifting (\ref{Chevalley}) that for all $\ell\gg0$,
\begin{equation}\label{dim}
\dim \mathbf{G}_\ell^{\geo} = \dim\bar{\mathbf{S}}_\ell.
\end{equation}

\textit{\textbf{Step IV}}.
Suppose $\ell\geq 5$.
For any compact subgroup $\Gamma\subset\GL_n(\Q_\ell)$ (resp. $\bar\Gamma\subset\GL_n(\F_\ell)$), 
we defined the \emph{$\ell$-dimension} $\dim_\ell\Gamma$ (resp. $\dim_\ell\bar\Gamma$) in \cite[$\mathsection2$]{HL14}
satisfying the following properties:
\begin{enumerate}[(i)]
\item $\dim_\ell$ is additive on short exact sequences;
\item $\dim_\ell$ vanishes for pro-solvable groups and 
finite simple groups that are not of Lie type in characteristic $\ell$;
\item if $\bar\Gamma$ is a finite simple group of Lie type in characteristic $\ell$, then there exists 
some connected adjoint semisimple group $\bar{\mathbf{S}}/\F_\ell$ such that $\bar\Gamma$ 
is isomorphic to the derived group of  $\bar{\mathbf{S}}(\F_\ell)$
and we define $\dim_\ell\bar\Gamma:=\dim \bar{\mathbf{S}}$.
\end{enumerate}
We obtain for $\ell\gg0$ that 
\begin{equation}\label{ldim}
\dim_\ell\bar\Gamma_\ell^{\geo}=\dim_\ell\bar{\mathbf{S}}_\ell(\F_\ell)^+=\dim_\ell\bar{\mathbf{S}}_\ell(\F_\ell)=\dim\bar{\mathbf{S}}_\ell=\dim \mathbf{G}_\ell^{\geo}
\end{equation}
by Step I, \cite[3.6(v)]{No87}, \cite[Prop. 3(iii)]{HL14}, and (\ref{dim}) respectively for each equality.
Recall the universal covering $\pi:\mathbf{G}_\ell^{\sc}\to \mathbf{G}_\ell^{\geo}$.
Since (a) the kernel and cokernel of $\pi^{-1}(\Gamma_\ell^{\geo})\to \Gamma_\ell^{\geo}$ are abelian and 
(b) the kernel of $\Gamma_\ell^{\geo}\twoheadrightarrow\bar\Gamma_\ell^{\geo}$ 
is pro-solvable (via the reduction map $\GL(H^i(Y_{\bar x_0},\Z_\ell))\to\GL(\bar V_\ell)$ for $\ell\gg0$), 
we obtain by the properties of $\dim_\ell$ that for $\ell\gg0$,
\begin{equation}\label{hopedim}
\dim_\ell\pi^{-1}(\Gamma_\ell^{\geo})\stackrel{(a)}{=}\dim_\ell\Gamma_\ell^{\geo}\stackrel{(b)}{=}\dim_\ell\bar\Gamma_\ell^{\geo}\stackrel{(\ref{ldim})}{=}\dim \mathbf{G}_\ell^{\geo}=:g.
\end{equation}

\textit{\textbf{Step V}}.
Let $\Delta_\ell$ be a maximal compact subgroup of 
$\mathbf{G}_\ell^{\sc}(\Q_\ell)$ that contains $\pi^{-1}(\Gamma_\ell^{\geo})$.
By \cite[3.2]{Ti79}, $\Delta_\ell$ is the stabilizer $\mathbf{G}_\ell^{\sc} (\Q_\ell)^x$ of a vertex $x$  in 
the \emph{Bruhat-Tits building} of $\mathbf{G}_\ell^{\sc} /\Q_\ell$.   
There exists a smooth affine group scheme $\cG$ over $\Z_\ell$
and an isomorphism $\iota$ from the generic fiber of $\cG$ to $\mathbf{G}_\ell^{\sc}$ 
such that $\iota(\cG(\Z_\ell)) = \mathbf{G}_\ell^{\sc} (\Q_\ell)^x$ \cite[3.4.1]{Ti79}. 
As $\mathbf{G}_\ell^{\sc}$ is simply-connected semisimple, 
the special fiber $\cG_{\F_\ell}$ is connected \cite[3.5.2]{Ti79}. 
The maximal compact subgroup $\Delta_\ell$ is \emph{hyperspecial} if and only if $\cG_{\F_{\ell}}$ is reductive \cite[3.8.1]{Ti79}, 
in which case it has the same root datum as the generic fiber \cite[XXII, 2.8]{SGA3}. 
Since (c) the kernel of the reduction map $r:\cG(\Z_\ell)\to \cG(\F_\ell)$ is pro-solvable and (d) $\cG$ is smooth over $\Z_\ell$,
we obtain by the properties of $\dim_\ell$ that for $\ell\gg0$,
\begin{equation}\label{moddim}
\dim_\ell r(\pi^{-1}(\Gamma_\ell^{\geo}))\stackrel{(c)}{=}\dim_\ell \pi^{-1}(\Gamma_\ell^{\geo})\stackrel{(\ref{hopedim})}{=}g=
\dim \mathbf{G}_\ell^{\sc}\stackrel{(d)}{=}\dim \cG_{\F_\ell}.
\end{equation}
Since the special fiber $\cG_{\F_\ell}$ is connected, 
 $\cG_{\F_\ell}$ is semisimple for $\ell\gg0$ by (\ref{moddim}) and \cite[Thm. 4(iv)]{HL14}.
It follows from above that $\Delta_\ell$ is hyperspecial and $\cG_{\F_\ell}$ is simply-connected semisimple for $\ell\gg0$.
For any connected algebraic group $\bar{\mathbf{G}}$ of dimension $g$ defined over $\F_\ell$, 
the order of $\bar{\mathbf{G}}(\F_\ell)$ satisfies 
\begin{equation}\label{Norder}
(\ell-1)^g\leq |\bar{\mathbf{G}}(\F_\ell)|\leq (\ell+1)^g
\end{equation}
by \cite[Lem 3.5]{No87}. Hence, 
there exists a constant $c(g)\geq 1$ depending only on $g$ such that for $\ell\gg0$,
\begin{equation}\label{compare}
\frac{(\ell-1)^g}{c(g)}\leq |r(\pi^{-1}(\Gamma_\ell^{\geo}))| \stackrel{subgp}{\leq} 
|\cG(\F_\ell)|\stackrel{(\ref{Norder})}{\leq} (\ell+1)^g,
\end{equation}
where the first inequality follows by considering (\ref{moddim}), (\ref{Norder}), the properties of $\dim_\ell$ in Step IV, 
and the orders of finite simple groups of Lie type in characteristic $\ell$ \cite[$\mathsection9$]{St67}
(e.g., $|\mathrm{PSL}_k(\ell)|=\frac{1}{(k,\ell-1)}\ell^{k(k-1)/2}(\ell^2-1)(\ell^3-1)\cdots(\ell^k-1)$). 
Since $g:=\dim\mathbf{G}_\ell^{\geo}\leq n^2$ for all $\ell$, the index
$[\cG(\F_\ell):r(\pi^{-1}(\Gamma_\ell^{\geo}))]\leq C(n)$ (a constant depending only on $n$) for $\ell\gg0$.
Since $\cG_{\F_\ell}$ is simply-connected semisimple, 
$\cG(\F_\ell)$ is generated by the subset of order $\ell$ elements $\cG(\F_\ell)[\ell]$ when $\ell\gg0$ (see the proof of \cite[Thm. 4]{HL14}). 
Since $\cG(\F_\ell)[\ell]$ belongs to $r(\pi^{-1}(\Gamma_\ell^{\geo}))$ for $\ell\gg C(n)$, 
the equality $r(\pi^{-1}(\Gamma_\ell^{\geo}))=\cG(\F_\ell)$ holds for $\ell\gg C(n)$.
Therefore, the subgroup $\pi^{-1}(\Gamma_\ell^{\geo})\subset \cG(\Z_\ell)$ surjects onto $\cG(\F_\ell)$ under 
the reduction map $r$ for $\ell\gg0$.
By the main theorem of \cite{Va03}, this implies $\pi^{-1}(\Gamma_\ell^{\geo})=\cG(\Z_\ell)=\Delta_\ell$ for $\ell\gg 0$,
which is hyperspecial maximal compact in $\mathbf{G}_\ell^{\geo}(\Q_\ell)$.
\end{proof}

\begin{cor}\label{unramified}
For all sufficiently large $\ell$, the identity component of the algebraic geometric monodromy group $\mathbf{G}_\ell^{\geo}$
is unramified over $\Q_\ell$.
\end{cor}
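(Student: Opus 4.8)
The plan is to read the corollary off from the proof of Theorem~\ref{main}. That proof shows, in Step~V, that for all $\ell\gg0$ the subgroup $\pi^{-1}(\Gamma_\ell^{\geo})$ equals $\cG(\Z_\ell)$ for a smooth affine $\Z_\ell$-group scheme $\cG$ whose generic fiber is $\mathbf{G}_\ell^{\sc}$ and whose special fiber $\cG_{\F_\ell}$ is connected reductive --- indeed simply-connected semisimple. A reductive group over $\Q_\ell$ admitting a hyperspecial point in its Bruhat--Tits building (equivalently, a reductive model over $\Z_\ell$) is unramified over $\Q_\ell$; hence $\mathbf{G}_\ell^{\sc}$ is unramified over $\Q_\ell$ for $\ell\gg0$. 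It remains to pass from $\mathbf{G}_\ell^{\sc}$ to the identity component of $\mathbf{G}_\ell^{\geo}$.

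First I would note that the d\'evissage used throughout $\mathsection3$ is harmless here: replacing $X$ by a connected finite \'etale cover $X'$ replaces $\pi_1^{et}(X_{\bar\F_q})$ by a finite-index subgroup, hence replaces $\mathbf{G}_\ell^{\geo}$ by a finite-index closed subgroup with the same identity component $(\mathbf{G}_\ell^{\geo})^\circ$. Since the assertion only concerns $(\mathbf{G}_\ell^{\geo})^\circ$, we may thus assume $\mathbf{G}_\ell^{\geo}$ is connected --- hence semisimple, by Deligne --- so that $\pi\colon\mathbf{G}_\ell^{\sc}\to\mathbf{G}_\ell^{\geo}$ is a central isogeny. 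Now unramifiedness descends along such an isogeny: the image under $\pi$ of a $\Q_\ell$-rational Borel subgroup of $\mathbf{G}_\ell^{\sc}$ is a $\Q_\ell$-rational Borel subgroup of $\mathbf{G}_\ell^{\geo}$, so $\mathbf{G}_\ell^{\geo}$ is quasi-split; and if $L/\Q_\ell$ is a finite unramified extension splitting $\mathbf{G}_\ell^{\sc}$, then $(\mathbf{G}_\ell^{\geo})_L$ is a quotient of the split group $(\mathbf{G}_\ell^{\sc})_L$, hence split, since a central quotient of a split torus is again a split torus. Therefore $(\mathbf{G}_\ell^{\geo})^\circ=\mathbf{G}_\ell^{\geo}$ is quasi-split and splits over an unramified extension, i.e.\ is unramified over $\Q_\ell$, for $\ell\gg0$. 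Concretely, one may take as a reductive $\Z_\ell$-model of $\mathbf{G}_\ell^{\geo}$ the quotient $\cG/\bar\mu$, where $\bar\mu$ is the schematic closure of $\ker\pi$ in $\cG$: this $\bar\mu$ is finite flat over $\Z_\ell$ (it lies in a maximal $\Z_\ell$-torus of $\cG$), and $\cG_{\F_\ell}/\bar\mu_{\F_\ell}$, being a central quotient of a semisimple group, is reductive.

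I do not expect a genuine obstacle, as the substance is already contained in Theorem~\ref{main}. The only points requiring care are (i) checking that the finite-\'etale-cover d\'evissage leaves $(\mathbf{G}_\ell^{\geo})^\circ$ unchanged, and (ii) the standard --- if slightly fiddly --- fact that the properties ``quasi-split'' and ``split over an unramified extension'' are inherited by central quotients of reductive groups; the one place to be attentive is the flatness and finiteness over $\Z_\ell$ of the schematic closure $\bar\mu$ of the isogeny kernel.
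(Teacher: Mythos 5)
Your proposal is correct and follows essentially the paper's own route: deduce from Theorem~\ref{main} (via the hyperspecial maximal compact subgroup, equivalently the reductive $\Z_\ell$-model from Step~V and \cite[$\mathsection1$]{Mi92}) that $\mathbf{G}_\ell^{\sc}$ is unramified for $\ell\gg0$, then transfer this to $(\mathbf{G}_\ell^{\geo})^\circ$ along the surjection $\pi$. The paper states the last step in one line, while you spell out why quasi-splitness and splitting over an unramified extension pass through the central isogeny (and sketch an explicit $\Z_\ell$-model $\cG/\bar\mu$); this is a harmless elaboration, not a different argument.
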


\begin{proof}
Since a connected reductive group $\mathbf{G}/\Q_\ell$ is unramified if and only if $\mathbf{G}(\Q_\ell)$ contains a hyperspecial 
maximal compact subgroup \cite[$\mathsection1$]{Mi92}, $\mathbf{G}_\ell^{\sc}$  is unramified for $\ell\gg0$ by Theorem \ref{main}.
Since $\pi:\mathbf{G}_\ell^{\sc}\twoheadrightarrow (\mathbf{G}_\ell^{\geo})^\circ$ is surjective, the identity component $(\mathbf{G}_\ell^{\geo})^\circ$
is unramified for $\ell\gg0$.
\end{proof}

\begin{remark}
Assuming $\Phi_\ell$ is semisimple for all $\ell$, then
$\mathbf{G}_\ell^\circ\times\C\subset\GL_{V_\ell\times\C}$ 
is independent of $\ell$ by \cite{Ch04} and \cite{Ka99} (see Step V of Theorem \ref{inv}).
Corollary \ref{unramified} is a necessary condition 
for the existence of a common $\Q$-form of $\{\mathbf{G}_\ell^\circ\subset\GL_{V_\ell}\}_\ell$.
\end{remark}

\section{Semisimplicity}
In this section, we give two examples of $\{\Phi_\ell\}$ such that the hypothesis of Theorem \ref{main} holds.
It suffices to show by the lemma below that the restriction of $\phi_\ell$ to a normal subgroup of $\pi_1^{et}(X_{\bar\F_q})$ 
(i.e., by taking a connected Galois \'etale cover of $X$) is 
semisimple for $\ell\gg0$.

\begin{lemma}\cite[Lemma 3.6]{HL15b}
Let $F$ be a field, $G$ a finite group, $H$ a normal subgroup of $G$ such that $[G:H]$ is non-zero in $F$,
and $V$ a finite dimensional $F$-representation of $G$. Then $V$ is semisimple if and only if its restriction
to $H$ is so.
\end{lemma}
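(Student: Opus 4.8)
The plan is to prove the two implications by different means; notably, one of them needs no hypothesis on the index. For the direction ``$V$ semisimple $\Rightarrow$ $V|_H$ semisimple'', which is Clifford's theorem, I would first reduce to the case $V$ simple, since a direct sum of semisimple modules is semisimple. Choose a simple $F[H]$-submodule $W\subseteq V$ (possible as $V\neq 0$ is finite dimensional). For each $g\in G$ the subspace $gW$ is again a simple $F[H]$-submodule: it is $H$-stable because $h\cdot(gw)=g\cdot((g^{-1}hg)w)$ and $g^{-1}hg\in H$ by normality, and the $F$-linear isomorphism $w\mapsto gw$ intertwines the $H$-action on $gW$ with the $g$-conjugate action on $W$, which is simple. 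Then $\sum_{g\in G}gW$ is a nonzero $G$-submodule of $V$, hence all of $V$; being a sum of simple $H$-submodules, $V|_H$ is semisimple.

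For the converse I would run a relative version of Maschke's averaging argument, and this is exactly where the hypothesis that $[G:H]$ is invertible in $F$ enters. Since a module over any ring is semisimple if and only if every submodule is a direct summand, it suffices, given an $F[G]$-submodule $U\subseteq V$, to construct a $G$-equivariant projection onto $U$. Because $V|_H$ is semisimple, $U$ has an $H$-stable complement, i.e.\ there is an $H$-equivariant $p\colon V\to U$ with $p|_U=\mathrm{id}_U$. Set
\[
\tilde p=\frac{1}{[G:H]}\sum_{gH\in G/H} g\,p\,g^{-1}.
\]
I would then check: (1) each term $g\,p\,g^{-1}$ depends only on the coset $gH$, since replacing $g$ by $gh$ alters it by $h\,p\,h^{-1}=p$ (the $H$-linearity of $p$), so the sum is well defined; (2) $\tilde p$ is $G$-equivariant, as conjugation by $g_0\in G$ merely permutes the cosets $G/H$; (3) $\tilde p(V)\subseteq U$, because $U$ is $G$-stable and $p(V)\subseteq U$; (4) $\tilde p|_U=\mathrm{id}_U$, since each of the $[G:H]$ terms restricts to $\mathrm{id}_U$ on $U$, which cancels the normalizing scalar. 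Hence $V=U\oplus\ker\tilde p$ as $F[G]$-modules, and $V$ is semisimple.

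The only place needing care is the bookkeeping in the averaging step — confirming that the coset-indexed sum is independent of representatives and that dividing by $[G:H]$ (legitimate precisely under the stated hypothesis) yields an honest projection rather than a scalar multiple of one; the rest is formal. It is worth noting that the hypothesis is sharp: if $\mathrm{char}(F)$ divides $[G:H]$, then the inflation to $G$ of the regular representation of $G/H$ has semisimple restriction to $H$ (a repeated trivial module) but is not semisimple over $F[G]$, by Maschke's theorem applied to $G/H$.
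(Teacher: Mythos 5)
Your proof is correct, and both halves are carried out carefully: the forward direction is Clifford's theorem (sum of $G$-translates of a simple $H$-submodule), and the converse is the relative Maschke averaging argument, with the well-definedness of the coset sum and the normalization by $[G:H]$ both checked. Note, however, that the paper does not supply its own proof here; it merely cites the lemma to [HL15b, Lemma~3.6], so there is no in-text argument to compare against. Your argument is the standard one and is almost certainly what the cited source does. One small point worth making explicit if you write this up: the step ``every $F[G]$-submodule of $V$ is a direct summand $\Rightarrow$ $V$ is semisimple'' is a general fact about modules, not special to finite-dimensional ones, but it is convenient to recall it, since you invoke it without comment. Your closing remark on sharpness (inflation of $F[G/H]$ when $\mathrm{char}\,F \mid [G:H]$) is a nice addition and is correct.
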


\noindent\textbf{Example 1.} Suppose the fibers of $f:Y\to X$ are curves or abelian varieties. 
Then the hypothesis of Theorem \ref{main} holds.

\begin{proof} \textit{\textbf{Step I.}} When $X$ is a curve, $\phi_\ell$ is factored through by 
a Galois representation of $K(X)$, the function field of $X$.
When the fibers of $f$ are abelian varieties, the conclusion follows directly from the Tate conjecture of abelian varieties over 
function fields \cite{Za74a,Za74b} (see also \cite[Ch. VI$\mathsection3$]{FW84},\cite[Thm. 3.1(iii)]{LP95}).
When the fibers are curves, the conclusion follows from above and
the fact that a smooth curve 
and its Jacobian variety have isomorphic $H^1$ representations.

\textit{\textbf{Step II.}} For general $X$, we may first assume $\mathbf{G}_\ell^{\geo}$ is connected for all $\ell$ by taking a
connected Galois \'etale cover \cite[Prop. 2.2(i)]{LP95}. By \cite{Ka99} and \cite{Ch04} (see Step V of Theorem \ref{inv}),
there exists a smooth geometrically connected curve $C$ of $X$ such that the algebraic geometric monodromy group associated to 
$Y\times_X C\to C$ is also equal to $\mathbf{G}_\ell^{\geo}$ for all $\ell$. 
Denote the images of $\pi_1^{et}(C_{\bar\F_q})$ and $\pi_1^{et}(X_{\bar\F_q})$ in respectively $\GL(V_\ell)$ and $\GL(\bar V_\ell)$ by
\begin{align*}
\begin{split}
\Lambda_\ell^{\geo}\subset\Gamma_\ell^{\geo}\subset\GL(V_\ell);\\
\bar\Lambda_\ell^{\geo}\subset\bar\Gamma_\ell^{\geo}\subset\GL(\bar V_\ell).
\end{split}
\end{align*}
We may assume $\bar\Gamma_\ell^{\geo}$ is generated by 
its order $\ell$ elements for $\ell\gg0$ by \cite{CT15}.
Since $\pi^{-1}(\Lambda_\ell^{\geo})\subset\pi^{-1}(\Gamma_\ell^{\geo})$ are compact subgroups of $\mathbf{G}_\ell^{\sc}(\Q_\ell)$ and 
$\pi^{-1}(\Lambda_\ell^{\geo})$ is hyperspecial maximal compact in $\mathbf{G}_\ell^{\sc}(\Q_\ell)$ for $\ell\gg0$ by Step I and Theorem \ref{main},
we have $\pi^{-1}(\Lambda_\ell^{\geo})=\pi^{-1}(\Gamma_\ell^{\geo})$ for $\ell\gg0$.
Hence, the index $[\Gamma_\ell^{\geo}:\Lambda_\ell^{\geo}]$ is bounded by some constant 
$C$ (depending on $n=\dim V_\ell$) for $\ell\gg0$.
It follows that $[\bar\Gamma_\ell^{\geo}:\bar\Lambda_\ell^{\geo}]\leq C$ for $\ell\gg0$ 
via the reduction map $\GL(H^i(Y_{\bar x_0},\Z_\ell))\to\GL(\bar V_\ell)$. 
This implies that the order $\ell$ elements of $\bar\Gamma_\ell^{\geo}$ belong to $\bar\Lambda_\ell^{\geo}$ when $\ell\gg C$.
Since $\bar\Gamma_\ell^{\geo}$ is generated by 
its order $\ell$ elements and $\bar\Lambda_\ell$ is semisimple on $\bar V_\ell$ for $\ell\gg0$, $\bar\Gamma_\ell^{\geo}$ 
is semisimple on $\bar V_\ell$ for $\ell\gg0$.
\end{proof}

\noindent\textbf{Example 2.} Identify $\Gamma_\ell^{\geo}$ as a subgroup of $\GL(H^i(Y_{\bar x_0},\Z_\ell))=\GL_n(\Z_\ell)$ for $\ell\gg0$.
Suppose there exists a connected semisimple subgroup $\mathbf{G}\subset\GL_{n,\Q}$ such that  
$(\mathbf{G}_\ell^{\geo})^\circ=\mathbf{G}\times\Q_\ell$ in $\GL_{n,\Q_\ell}$ and
$$\Gamma_\ell^{\geo}\cap(\mathbf{G}_\ell^{\geo})^\circ\subset \mathbf{G}(\Z_\ell)\subset \GL_n(\Z_\ell)$$
for $\ell\gg0$. Then the hypothesis of Theorem \ref{main} holds.

\begin{proof}
\textit{\textbf{Step I.}} 
By taking a connected Galois \'etale cover, we may assume $\mathbf{G}_\ell^{\geo}$ is connected for all $\ell$ \cite[Prop. 2.2(i)]{LP95}
and $\bar\Gamma_\ell^{\geo}=(\bar\Gamma_\ell^{\geo})^+$ for $\ell\gg0$ \cite{CT15}.
The closed subgroup $\mathbf{G}\subset\GL_{n,\Q}$ can be extended to a closed subgroup scheme 
$\mathbf{G}_{\Z[\frac{1}{N}]}\subset \GL_{n,\Z[\frac{1}{N}]}$ smooth over $\Z[\frac{1}{N}]$ 
for some sufficiently divisible integer $N$.
Let $\mathbf{G}_{\F_\ell}\subset\GL_{n,\F_\ell}$ be the base change to $\F_\ell$ for $\ell\gg0$.
Since $\bar\Gamma_\ell^{\geo}\subset\mathbf{G}_{\F_\ell}$ for $\ell\gg0$, we obtain 
\begin{equation}\label{compare2}
\dim_{\bar\Q_\ell}(V_\ell^{\otimes m}\otimes\bar\Q_\ell)^{\mathbf{G}}\stackrel{Lem.~\ref{summand}}{\leq}
\dim_{\bar\F_\ell}(\bar V_\ell^{\otimes m}\otimes\bar\F_\ell)^{\mathbf{G}_{\F_\ell}}\leq
\dim_{\bar\F_\ell}(\bar V_\ell^{\otimes m}\otimes\bar\F_\ell)^{\bar\Gamma_\ell^{\geo}}
\end{equation}
for $\ell\gg0$.
Since $\dim_{\Q_\ell}(V_\ell^{\otimes m})^{\pi_1^{et}(X_{\bar\F_q})}=\dim_{\bar\Q_\ell}(V_\ell^{\otimes m}\otimes\bar\Q_\ell)^{\mathbf{G}}$ as
$\Gamma_\ell^{\geo}$ is Zariski dense in $\mathbf{G}$, we obtain 
\begin{equation}\label{eq1}
\dim_{\bar\F_\ell}(\bar V_\ell^{\otimes m}\otimes\bar\F_\ell)^{\mathbf{G}_{\F_\ell}}=
\dim_{\bar\F_\ell}(\bar V_\ell^{\otimes m}\otimes\bar\F_\ell)^{\bar\Gamma_\ell^{\geo}}
\end{equation}
for $\ell\gg0$ by (\ref{compare2}) and Theorem \ref{inv}.
Since $\mathbf{G}_{\F_\ell}$ is connected semisimple for $\ell\gg0$,
the natural representation $i_\ell:\mathbf{G}_{\F_\ell}\to\GL(\bar V_\ell\otimes\bar\F_\ell)$ 
is semisimple for $\ell\gg0$ \cite{La95b}. 
Hence, it suffices to prove that for all $\ell\gg0$, 
the restriction of any irreducible $\bar\F_\ell$-subrepresentation $W_{\bar\F_\ell}$ of $i_\ell$ to $\bar\Gamma_\ell^{\geo}$ 
is still irreducible as $\bar\Gamma_\ell^{\geo}\subset \mathbf{G}_{\F_\ell}$.

\textit{\textbf{Step II.}}
Suppose $W_{\bar\F_\ell}$ is a direct summand of $i_\ell$. Then for any $m\in\N$, we have
\begin{equation}\label{eq2}
\dim_{\bar\F_\ell}(W_{\bar\F_\ell}^{\otimes m})^{\mathbf{G}_{\F_\ell}}=
\dim_{\bar\F_\ell}(W_{\bar\F_\ell}^{\otimes m})^{\bar\Gamma_\ell^{\geo}}
\end{equation}
when $\ell$ is sufficiently large by (\ref{eq1}).
Suppose $\bar\Gamma_\ell^{\geo}$ is not irreducible on $W_{\bar\F_\ell}$.
Then there exists a $k$-dimensional subrepresentation $U_{\bar\F_\ell}$ of $\bar\Gamma_\ell^{\geo}$ and $k<\dim W_{\bar\F_\ell}\leq n$ holds.
By (\ref{eq2}) and (the proof of) Corollary \ref{sym},
\begin{equation}\label{eq3}
\dim_{\bar\F_\ell}(\mathrm{Alt}^k W_{\bar\F_\ell})^{\mathbf{G}_{\F_\ell}}=
\dim_{\bar\F_\ell}(\mathrm{Alt}^k W_{\bar\F_\ell})^{\bar\Gamma_\ell^{\geo}}
\end{equation}
holds when $\ell\gg0$. Since $\bar\Gamma_\ell^{\geo}\subset \mathbf{G}_{\F_\ell}$ for $\ell\gg0$,
\begin{equation}\label{eq4}
(\mathrm{Alt}^k W_{\bar\F_\ell})^{\mathbf{G}_{\F_\ell}}=
(\mathrm{Alt}^k W_{\bar\F_\ell})^{\bar\Gamma_\ell^{\geo}}
\end{equation}
holds when $\ell\gg0$. Since $\bar\Gamma_\ell^{\geo}$ is generated by its order $\ell$ elements for $\ell\gg0$,
$\mathrm{Alt}^k U_{\bar\F_\ell}$ is one-dimensional and belongs to $(\mathrm{Alt}^k W_{\bar\F_\ell})^{\bar\Gamma_\ell^{\geo}}$
when $\ell\gg0$ by construction. Thus, $\mathrm{Alt}^k U_{\bar\F_\ell}\subset (\mathrm{Alt}^k W_{\bar\F_\ell})^{\mathbf{G}_{\F_\ell}}$
by (\ref{eq4}) which is impossible. Indeed, let $\{v_1,...,v_k\}$ be a basis of $U_{\bar\F_\ell}$ and $Z_{\bar\F_\ell}\neq 0$ a 
complement of $U_{\bar\F_\ell}$ in $W_{\bar\F_\ell}$. Since $\mathbf{G}_{\F_\ell}$ is irreducible on $W_{\bar\F_\ell}$,
there exists $x\in \mathbf{G}_{\F_\ell}(\bar\F_\ell)$ that does not preserve $U_{\bar\F_\ell}$. Then we have the following equations
\begin{align*}
\begin{split}
x\cdot v_1 &= u_1+z_1\\
x\cdot v_2 &= u_2+z_2\\
&\vdots \\
x\cdot v_k &= u_k+z_k,
\end{split}
\end{align*}
where the notation is defined so that $u_i\in U_{\bar\F_\ell}$ and $z_i\in Z_{\bar\F_\ell}$ for $1\leq i\leq k$.
We may assume $\{z_1,...,z_h\}$ is a non-empty maximal linearly independent subset of $\{z_1,...,z_k\}$.
If $\mathrm{Alt}^k U_{\bar\F_\ell}\subset (\mathrm{Alt}^k W_{\bar\F_\ell})^{\mathbf{G}_{\F_\ell}}$, then 
\begin{equation}\label{eq5}
x\cdot (v_1\wedge\cdots\wedge v_k)=(u_1+z_1)\wedge \cdots\wedge (u_k+z_k)\in \mathrm{Alt}^k U_{\bar\F_\ell}.
\end{equation}
Since $z_1\neq 0$, we obtain $k>1$ by (\ref{eq5}). Since we have the decomposition 
\begin{equation}\label{decomp}
\mathrm{Alt}^k(U_{\bar\F_\ell}\oplus Z_{\bar\F_\ell})=\bigoplus_{i+j=k}\mathrm{Alt}^iU_{\bar\F_\ell}\otimes \mathrm{Alt}^jZ_{\bar\F_\ell},
\end{equation}
we have $z_1\wedge\cdots\wedge z_k=0$ by (\ref{eq5}), which is the same as $h<k$. We may 
assume $z_{h+1},...,z_k$ are all equal to zero by 
the fact that $\{z_1,...,z_h\}$ is a maximal linearly independent subset of $\{z_1,...,z_k\}$
and replacing $\{v_1,...,v_h,v_{h+1},...,v_k\}$ 
with a suitable basis $\{v_1,...,v_h,v_{h+1}',...,v_k'\}$. It follows that $\{z_1,...,z_h,u_{h+1},...,u_k\}$ 
is linearly independent because $x$ is invertible. 
Therefore, $ u_{h+1}\wedge\cdots\wedge u_k\wedge z_1\wedge\cdots\wedge z_h$ is non-zero, which is absurd by
$z_{h+1}=\cdots=z_k=0$, (\ref{eq5}), and (\ref{decomp}).
This implies that $\bar\Gamma_\ell^{\geo}$ cannot have a $k$-dimensional subrepresentation of $W_{\bar\F_\ell}$ when $\ell$ is sufficiently large. Since there are finitely many $k$ less than $\dim W_{\bar\F_\ell}\leq n$, 
we conclude that $\bar\Gamma_\ell^{\geo}$ is irreducible on $W_{\bar\F_\ell}$ and thus semisimple on $\bar V_\ell\otimes\bar\F_\ell$
if $\ell$ is sufficiently large.
\end{proof}

\end{document}